\theoremstyle{plain}
\newtheorem{thm}{Theorem}[section]
\newtheorem{lem}[thm]{Lemma}
\theoremstyle{definition}
\newtheorem{defn}{Definition}[section]
\newtheorem{exam}{Example}[section]
\theoremstyle{remark}
\newtheorem{rem}{Remark}[section]
\newcommand{\epsi}{\varepsilon}
\numberwithin{equation}{section}
\title{ \bf A Note on the Paper ``Optimality Conditions for Vector Optimization
Problems with Difference of Convex Maps"}
\author{\bf Allahkaram Shafie,$^{1}$ Farid Bozorgnia$^{2}$ \\
$^{1}$Department of Mathematics, Razi University,
Kermanshah, Iran.\\
 $^{2}$Department of Mathematics, Instituto Superior T\'{e}cnico, Lisbon.\\
 E-mail : shafie.allahkaram@gmail.com,  bozorg@math.ist.utl.pt}
\date{}
\begin{document}
\setlength{\evensidemargin}{-0.2in}
\maketitle

\begin{abstract}
\noindent In this work, some counterexamples are given  to refute  some results reported in the  paper by Guo and Li  \cite{GL} (J Optim Theory  Appl 162,(2014), 821-844). We correct the faulty in some of their   theorems  and we present alternative proofs. Moreover, we extend  the definition  of approximately pseudo-dissipative in the setting of metrizable topological vector spaces.  
\end{abstract}
\vspace{2mm}

\noindent {\bf Keywords:} Convex mapping, Optimality condition, Local weak minimal
solution, Subdifferential, Pareto  minimal point.
 \section{Introduction}
In several optimization problems   nonlinear and   nonconvex
functions can be decomposed  into the  difference of convex (DC) functions (see \cite{Tuy}).

\hspace*{-0.6cm}In the last decade, different kinds of DC programming have been investigated extensively and significant  results have been achieved,  see for example  \cite{DMN2,DMN1,FLN,FWZ,FZ,GL,Jey,LYZ,SGZ,Tuy,Vo} and the references therein. Here, we briefly  mention the results on duality and optimality  in
\cite{DMN2,DMN1,NGA,FZ,GL,SGZ,SLZ}.   In  \cite{DMN2,DMN1,NGA} the authors consider  optimization problems with objectives given as DC functions and constraints described by  convex inequalities.   For Banach spaces, they  obtain  necessary and sufficient optimality conditions for DC
infinite and semi-infinite programs. Efficient upper estimates of certain subdifferentials of   value functions for the DC optimization problem are given in \cite{DMN2}.  In \cite{DMN1},  the authors provide  characterizations of the Farkas-Minkowski constraint qualification.

 \hspace*{-0.6cm}Fang and Zhao    introduced the local and global KKT type conditions for the DC optimization problem  in \cite{FLN}. Using   properties of the  subdifferential, they provide   some sufficient and  necessary conditions for these   optimality conditions.
In the case of  DC optimization, weak and
strong duality assertions for extended Ky Fan inequalities are provided  in  \cite{SGZ}. The authors in \cite{SGZ}  apply their dual
problems also  to a convex optimization problem and a generalized variational inequality problem. By using the
properties of the epigraphs of the conjugate functions, Sun, et al. \cite{SLZ} introduced a closedness qualification condition. They then employed their condition to investigate duality and Farkas-type results for a DC infinite programming problem.  Also in \cite{LA} established optimality conditions under convexity and continuity assumptions for set functions.

In \cite{GL}, Guo and Li   use the notions of strong subdifferential and epsilon subdifferential to obtain
necessary and sufficient optimality conditions for an epsilon-weak Pareto minimal point and an
epsilon-proper Pareto minimal point of a DC vector optimization problem.

\hspace*{-0.7cm} In this article, we show that some  theorems and results in \cite{GL} are not correct. Furthermore, we clarify   an existence gap  by providing some counterexamples. Finally  we  present corrected versions of their results.

\section{Preliminaries}\label{section 2}

Let us briefly recall   the notation used in  this work. For the most part, we follow notations as in \cite{GL}.   Throughout this paper, $X$ is a  metrizable topological vector space. Furthermore, $Y$ and $Z$ stand for topological vector spaces. We will denote the dual of $Y$ and $Z$ by $Y^*$ and $Z^*$ respectively, with duality pairing
denoted by $\left\langle {.,.} \right\rangle.$ The origins of the topological vector spaces are denoted by $0_X, 0_Y$, and $0_Z.$  As usual   $L(X, Y )$  is the set of all linear continuous operators from
$X$ to $Y$.    Moreover, let $K \subset Y$  and $D \subset Z$  be  proper (i.e., $K\ne\{0_Y\} \ne Y$ ) convex cones with nonempty interior (i.e., $ {\rm int}K\ne \emptyset$). Let $l(K) = K \cap -K$ be the linearity of $K.$
The cone $K$ determines an order relation on $Y$ denoted in the sequel
by $\mathop\preceq_K.$ We recall the following definition  of  ordering relations:
$$\begin{array}{l}
 y'\mathop\preceq_K y \Leftrightarrow y - y' \in K, \\
 y'\mathop\prec_K y \Leftrightarrow y - y' \in {\mathop{\rm int}} K, \\
 y'\mathop\npreceq_K y \Leftrightarrow y - y' \notin K, \\
 y'\mathop\nprec_K y \Leftrightarrow y - y' \notin {\mathop{\rm int}} K. \\
 \end{array}$$
 The negative polar cone(or dual cone) $K^*$ of $K$ and the strict polar cone $(K^*)^\circ$ of $K$ are defined respectively by
 $$K^*=\{y^*\in Y^*:\langle y^*,y \rangle\ge 0~\text{for all}~ y\in K\},$$ and $$(K^*)^\circ=\{y^*\in Y^*:\langle y^*,y \rangle> 0~\text{for all}~ y\in K\setminus l(K)\}.$$
 Clearly,
  $(K^*)^\circ \subset K^* \setminus \{0\}$ since $K+K\setminus l(K)=K\setminus l(K).$
  For  $A \subset X$ the indicator function   $\delta_A:X\longrightarrow\mathbb{R}\cup\{+\infty\}$  is defined by
  $${\delta _A}(x) = \left\{ \begin{array}{ll}
 0   & x \in A,\\
 +\infty  &  x \notin A.\\
 \end{array} \right. $$

 \begin{rem}
Note that in  a locally convex space  $Y,$ always there exists a convex cone with nonempty interior. Indeed,  if $U$ be a convex neighborhood of zero and $y\notin Y,$ then   it is sufficient to consider $K={\rm cone}(U-y)\subset Y.$
\end{rem}

 \begin{defn}
The vector-valued map $F : X\longrightarrow Y$ is said to be $K$-convex iff, for all $x_1, x_2\in X$ and $0\leq\lambda\leq1$, the following inequality $$F(\lambda {x_1} + (1 - \lambda ){x_2}) \preceq_K \lambda F({x_1}) + (1 - \lambda )F({x_2}),$$
holds. Also $F$ is said to be $K$-convexlike iff for all $x_1,x_2\in X$ and $0\leq\lambda\leq1$ there exists $x_3\in X$ such that $$F({x_3}) \preceq_K \lambda F({x_1}) + (1 - \lambda )F({x_2}).$$
\end{defn}
It is worth to mention  that $F$ is $K$-convexlike on a convex subset  $C\subset X$ iff $F(C)+K$ be convex.

\begin{defn}
Let $X$ and $Y$ be topological linear spaces, $Y$ be ordered by a convex cone $K\subset Y,$ and $F:X\longrightarrow Y$ be a given map. For an arbitrary $\bar{x}\in X,$ the set
 $$\partial F(\overline x ): = \left\{ {T \in L(X,Y)|\,\,T(x - \overline x )  \preceq_K F(x) - F(\overline x ), \, \,\,\forall x \in X} \right\}$$
\end{defn}
is called the  strong  subdifferential  of $F$ at $ \overline{x}.$ Also  let $\varepsilon \in K,$ then  $\varepsilon$-subdifferential of $F$ at  $\bar{x}$ is defined as following
$${\partial _{\varepsilon {\mkern 1mu} {\mkern 1mu} }}F(\bar x): = \left\{ {T \in L(X,Y)|{\mkern 1mu} {\mkern 1mu} T(x - \overline x ){\preceq _K}F(x) - F(\bar x) + \varepsilon, {\mkern 1mu} {\mkern 1mu} \,\,\forall x \in X} \right\}.$$

We consider the following cone-constrained vector optimization problem as in   \cite{GL}  sometimes called $DC$ vector optimization where  refers to difference of two cone convex functions:
\begin{equation*}
	(P)~~
	\left\{ {\begin{array}{*{20}{c}}
   K- {\mathop  {\textrm{Min}}\limits {\rm{ }}\left( {  F(x) - G(x)} \right),} \hfill  \\
   {{\rm{subject ~to}} ~x \in C\,{\rm{and}}\,H(x) - S(x) \in -D,} \hfill  \\
\end{array}} \right.
\end{equation*}
where $F, G:X\longrightarrow Y$ are $K$-convex and
$S, H:X\longrightarrow Z$ are $D$-convex maps and $C$ is a convex subset of $X.$

\begin{defn}\cite{GL}
	Suppose that $\Omega:=\{x\in C: H(x)-S(x)\in -D\}$ and $\varepsilon\in K$. An element $\bar{x}\in\Omega$ is called an
	$\varepsilon$-weak local Pareto minimal solution of problem ($P$) iff
	there exists a neighborhood $U$ of $\bar{x}$ such that
	\[
F(\bar{x})-G(\bar{x})\in \epsi \mbox{WMin}(F-G){(U\cap\Omega)},
\]
	i.e.,
	$$(F-G){(U\cap\Omega)}\subset F(\bar{x})-G(\bar{x})-\epsilon+Y\setminus -{\rm int}K,$$ where
\[
(F-G){(U\cap\Omega)}=\{F(x)-G(x): x\in U\cap\Omega\}.
\]
Similarly, $\bar{x}$ is said to be an $\varepsilon$-proper local Pareto minimal solution of problem $(P)$ iff there exists a neighborhood $U$ of $\bar{x}$ such that
\begin{equation*}
F(\overline x ) - G(\overline x ) \in \epsi \mbox{PMin}(F - G)(U \cap \Omega ), 
\end{equation*}

i.e., there exists a convex cone $K^{'}\subset Y$ with $K\setminus l(K)\subseteq {\rm int} K^{'}$ such that
$$(F - G)(U \cap \Omega ) \subset F(\overline x ) - G(\overline x ) - \varepsilon  + Y\backslash  - {\mathop{\rm int}} {K^{'}}.$$
\end{defn}


In the sequel we use  the following well-known property, see \cite{JJ}.
 \begin{lem}\label{l1}Let $K$ be a convex cone in  topological vector space $Y.$ Then the following assertion holds
\begin{eqnarray}
y\in{\rm int}K\Rightarrow \left\langle  y^*,y \right\rangle >0 ,~~~~\forall  y^*\in K^*\setminus\{0\}.
\end{eqnarray}
\end{lem}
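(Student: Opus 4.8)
The plan is to read off the non-strict inequality directly from the definition of the dual cone, and then to upgrade it to a strict one by a contradiction argument that exploits the openness of ${\rm int}\,K$ together with the fact that a nonzero continuous linear functional cannot be nonnegative on an entire symmetric neighborhood of the origin. First I would note that the weak inequality is immediate: since $y\in{\rm int}\,K\subseteq K$ and $y^*\in K^*$, the defining property of $K^*$ gives $\langle y^*,y\rangle\ge 0$. It therefore suffices to exclude the case $\langle y^*,y\rangle=0$, which I assume toward a contradiction.

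Because $y\in{\rm int}\,K$, there is a neighborhood $U$ of $0_Y$ with $y+U\subseteq K$; replacing $U$ by $U\cap(-U)$, I may assume $U$ is symmetric, so that $-U=U$. The crucial step is to produce a vector $u\in U$ with $\langle y^*,u\rangle<0$. Here I use $y^*\ne 0$: choose $w\in Y$ with $\langle y^*,w\rangle\ne 0$, and since $U$ is absorbing, scale $w$ by a small $t>0$ so that $tw\in U$; replacing $tw$ by its negative if necessary (legitimate because $U$ is symmetric), I obtain $u\in U$ with $\langle y^*,u\rangle<0$. Now $y+u\in K$, so membership of $y^*$ in $K^*$ yields $\langle y^*,y+u\rangle\ge 0$, and combined with the assumption $\langle y^*,y\rangle=0$ this forces $\langle y^*,u\rangle\ge 0$, contradicting the choice of $u$. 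Hence $\langle y^*,y\rangle>0$.

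The only point requiring care is the extraction of the vector $u$, that is, the passage from $y^*\ne 0$ to the existence of a point in an arbitrarily small symmetric neighborhood of the origin on which $y^*$ is strictly negative. This rests only on the elementary facts that neighborhoods of the origin in a topological vector space are absorbing and may be taken symmetric, so no local convexity or metrizability of $Y$ is needed; everything else is just the definition of $K^*$. I expect this neighborhood-and-functional step to be the main (though routine) obstacle, and I would state it explicitly rather than appeal to it implicitly.
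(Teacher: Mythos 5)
Your proof is correct. Note, however, that the paper does not actually prove this lemma: it is stated as a ``well-known property'' with a citation to Jahn's book \cite{JJ}, so there is no in-paper argument to compare against. Your self-contained proof is the standard one and is complete: the weak inequality $\langle y^*,y\rangle\ge 0$ follows from $\mathrm{int}\,K\subseteq K$ and the definition of $K^*$, and the upgrade to strict inequality correctly uses only that neighborhoods of $0_Y$ in a topological vector space are absorbing and may be taken symmetric, so that a nonzero linear functional must take a strictly negative value somewhere in $U$, which then contradicts $y+U\subseteq K$ if $\langle y^*,y\rangle=0$. No local convexity or metrizability is needed, as you say, and in fact even the continuity of $y^*$ plays no role in the argument. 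The one stylistic remark is that your ``crucial step'' could be phrased as the observation that a linear functional bounded below on a neighborhood of the origin is identically zero, which is exactly the content of your neighborhood-and-functional step.
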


The following definition is based on metrizable topological vector space which is  slightly different from Definition 3.1 in \cite{GL}. We note that $X$ with the topology generated by  metric $d$ is a  topological vector space.
\begin{defn}
A set valued $M: X\rightrightarrows L(X,Y)$ is said to be approximately
pseudo-dissipative at $\bar{x}$  iff, for every $\epsilon\in {\rm int}K$, one can find a neighborhood
$U$ of $\bar{x}$ such that
\begin{equation}\label{q1}
\forall x \in U, \,\,\,\exists T \in M(x),\,\,{T^*} \in M(\overline{x} )\,\,\,\text{s.t.}\,\,{\left( {T - {T^*}} \right) (x - \overline x) } \preceq_K\varepsilon d(x,\overline x ).
\end{equation}
\end{defn}
\section{Sufficient optimality condition}\label{section 3}

In this part, first  we review  the Theorems 3.1 and 3.2 stated in  \cite{GL}, then we give an  example which demonstrates   these theorems are not correct.

(Theorem 3.1  \cite{GL}) Let  $\overline{x} \in \Omega$. Assume that the set-valued maps  $ \partial_{\epsilon} G$   and  $\partial S $  are both
approximately pseudo-dissipative at  $\overline{x}$. If in addition, for any
 $ T\in \partial_{\epsilon} G(\bar{x})$   and
$ L \in \partial S(\bar{x}),$  there exist  $y^{*} \in K^{*}\setminus{\{0}\} $ and $z^{*} \in D^{*}$ such that
\begin{equation*}
\left\{ \begin{array}{ll}
 \,\,{y^*}o T  + \,{z^*}o L \in \partial {({y^*}o F + \,{z^*}o H){(\overline x )}}, & \\
\left\langle {{z^*},H(\overline x ) - S(\overline x )} \right\rangle  = 0,\\
 \end{array} \right.
\end{equation*}
then $\bar{x} $  is an $\epsilon$-weak local Pareto minimal solution of problem (P).

(Theorem $3.2$, \cite{GL})
Let $\bar{x}\in\Omega.$ Assume that the set-valued maps $\partial_{\varepsilon}G$ and $\partial S$ are both approximately pseudo-dissipative at $\bar{x}.$ If in addition, for any
 $ T   \in {\partial _\varepsilon }G(\overline x )   $ and
 $ L   \in    \partial S(\overline{x} )$  there exist $y^* \in (K^*)^{\circ}$   and  $ z^*\in D^*$  such that
\begin{equation*}
\left\{ \begin{array}{l}
 \,\,{y^*}o T  + \,z^{*}o L  \in \partial {(y^{*}o F + \,z^{*}o H)(\overline{x} )}, \\
 \left\langle z^{*},H(\overline x ) - S(\overline x )\right\rangle  = 0, \\
 \end{array} \right.
\end{equation*}
then $\bar{x}$ is an $\varepsilon$-proper local Pareto minimal solution of problem $(P).$

The following example shows that Theorems 3.1  and 3.2 and subsequent corollaries 3.1, 3.2, 3.3, 3.4, 3.5, 3.6 in \cite{GL} are not correct, and need several corrections.
\begin{exam}
Let $X=\mathbb{R},Y=Z=\mathbb{R}^2,C=[-1,1],K=D=[0,+\infty)\times [0,+\infty),\bar{x}=0,\varepsilon=(0,0).$ Define  $F,G,H,S:\mathbb{R} \to \mathbb{R}^2$ by
\begin{equation*}
\,\left\{ \begin{array}{l}
 F(x) = {(x^4,x^2)} \\
 G(x) = {(x^2,2x^2)} \\
 H(x) = (x,-1 )\\
 S(x) =(x + 1,0). \\
 \end{array} \right.
\end{equation*}
Clearly $F,G$ are $K$-convex and $H, S$ are $D$-convex and
\[
 \Omega=\left\{ {x \in C:\,H(x) - S(x) \in  - D} \right\} = [- 1,1].
  \]
Also we have
    \[
    \partial {G_\varepsilon }(x)=\{(2x,4x)\} \quad  \textrm{and} \quad  \partial S( x )=\{(1,0)\}.
     \]
   Since  $ \partial {G_\varepsilon }, \partial S$ are continuous then by Lemma 3  in  \cite{PN}   are approximately pseudo-dissipative at $\bar{x}=0$. For  given $T\in \partial {G_\varepsilon }( \bar{x} )$ and $L\in \partial S( \bar{x}),$   we let  $z^*=0, y^*\in K^*\setminus\{0\}.$ One can easily check  that
 $$\left\{ \begin{array}{l}
 \left\langle {{z^*},H(\bar x) - S(\bar x)} \right\rangle  = \left\langle {{z^*},( - {\rm{ }}1, - 1)} \right\rangle  = 0, \\
 {y^*}oT + {\rm{ }}{z^*}oL = 0 \in \partial {\left( {{y^*}oF + {\rm{ }}{z^*}oH} \right)(\bar x)} = \partial {\left( {{y^*}o({x^4},{x^2})} \right)(0)}. \\
 \end{array} \right.$$

 Observe that all  hypotheses of Theorem 3.1 in  \cite{GL} are satisfied, but $\bar{x}$ is not an $\varepsilon$-weak local Pareto minimal solution of problem (P). Indeed,  for any neighborhood  $U$ of $\bar{x}=0$ and $x\in U\cap\Omega,$ one has $$F(x) - G(x) - \left( {F(\overline x ) - G(\overline x )} \right) = (x^4 - x^2,-x^2 )\in  - {\mathop{\rm int}} K=(-\infty,0)\times(-\infty,0).$$
\end{exam}

 The following theorems  are modifications of Theorems 3.1 and 3.2   in \cite{GL} respectively.
\begin{thm}
Let $\bar{x}\in\Omega.$ Assume that the set-valued maps $\partial_{\varepsilon}G$ and $\partial S$ are both approximate pseudo-dissipative at $\bar{x}.$ If in addition, for any $\left( {T,L} \right) \in {\partial _\varepsilon }G(\overline x ) \times \partial S(\overline x )$ and $\left( {\alpha ,\beta } \right) \in {\mathop{\rm int}} K \times {\mathop{\rm int}} D$ there exist $(y^*,z^*)\in K^*\backslash\{0\}\times D^*$ such that
\begin{equation*}
\left\{ \begin{array}{l}
 \,\,{y^*}o (T - \alpha ) + \,{z^*}o (L - \beta ) \in \partial {({y^*}o F + \,{z^*}o H){(\overline x )}}, \\
\left\langle {{z^*},H(\overline x ) - S(\overline x )} \right\rangle  = 0,\\
 \end{array} \right.
\end{equation*}
then $\bar{x}$ is an $\varepsilon$-weak local Pareto minimal solution of problem $(P).$
\end{thm}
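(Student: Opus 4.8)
The plan is to establish the conclusion by a scalarization-and-contradiction argument, using some $y^{*}\in K^{*}\setminus\{0\}$ together with Lemma \ref{l1} to convert the vector statement $(F-G)(x)-(F-G)(\overline{x})+\varepsilon\notin-\mathrm{int}K$ into a single scalar inequality. Since $w\in\mathrm{int}K$ forces $\langle y^{*},w\rangle>0$ for every $y^{*}\in K^{*}\setminus\{0\}$, it suffices to produce, for each feasible $x$ close to $\overline{x}$, one functional $y^{*}\in K^{*}\setminus\{0\}$ with
\[
\langle y^{*},(F-G)(x)-(F-G)(\overline{x})+\varepsilon\rangle\ge 0 ,
\]
for then $-[(F-G)(x)-(F-G)(\overline{x})+\varepsilon]$ cannot lie in $\mathrm{int}K$. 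I will obtain this $y^{*}$ (and a companion $z^{*}\in D^{*}$) from the hypothesis, and assume for contradiction that the displayed inequality fails, i.e. that the scalarized quantity is strictly negative.

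First I would fix $(\alpha,\beta)\in\mathrm{int}K\times\mathrm{int}D$ and invoke the approximate pseudo-dissipativity of $\partial_{\varepsilon}G$ and of $\partial S$ at $\overline{x}$: this yields a single neighborhood $U$ of $\overline{x}$ (the intersection of the two, further shrunk so that $d(x,\overline{x})\le 1$ on $U$) such that for every $x\in U$ there are $T_{x}\in\partial_{\varepsilon}G(x)$, $T\in\partial_{\varepsilon}G(\overline{x})$, $L_{x}\in\partial S(x)$ and $L\in\partial S(\overline{x})$ with $(T_{x}-T)(x-\overline{x})\preceq_{K}\alpha\,d(x,\overline{x})$ and $(L_{x}-L)(x-\overline{x})\preceq_{D}\beta\,d(x,\overline{x})$. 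Feeding this particular pair $(T,L)$ and $(\alpha,\beta)$ into the hypothesis supplies $y^{*}\in K^{*}\setminus\{0\}$ and $z^{*}\in D^{*}$ with the stationarity inclusion and the complementary-slackness identity $\langle z^{*},H(\overline{x})-S(\overline{x})\rangle=0$. Because $F$ is $K$-convex with $y^{*}\in K^{*}$ (respectively $H$ is $D$-convex with $z^{*}\in D^{*}$), the scalar maps $y^{*}\!\circ\!F$ and $z^{*}\!\circ\!H$ are convex, so the inclusion is a genuine subgradient inequality for $y^{*}\!\circ\!F+z^{*}\!\circ\!H$ at $\overline{x}$.

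Next I would assemble four scalar inequalities, all tested between $x$ and $\overline{x}$. The $\varepsilon$-subgradient relation for $T_{x}\in\partial_{\varepsilon}G(x)$, read at the point $\overline{x}$ and paired with $y^{*}$, gives an upper bound for $(y^{*}\!\circ\!G)(x)$; the subgradient relation for $L_{x}\in\partial S(x)$, paired with $z^{*}$, gives an upper bound for $(z^{*}\!\circ\!S)(x)$; the stationarity inclusion, read at the point $x$, lower-bounds $(y^{*}\!\circ\!F+z^{*}\!\circ\!H)(x)$; and feasibility $H(x)-S(x)\in-D$ together with $z^{*}\in D^{*}$ and complementary slackness yields $\langle z^{*},H(x)-H(\overline{x})\rangle\le\langle z^{*},S(x)-S(\overline{x})\rangle$, which lets me trade the $z^{*}\!\circ\!H$ contribution for the $z^{*}\!\circ\!S$ one in the favorable direction. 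Chaining these and using the two pseudo-dissipativity estimates to replace the subgradients at $x$ by those at $\overline{x}$, the $S$-terms cancel and one is left with a lower bound for $\langle y^{*},(F-G)(x)-(F-G)(\overline{x})+\varepsilon\rangle$ in which the only surviving unwanted quantities are the error contributions $\langle y^{*},\alpha\rangle\,d(x,\overline{x})$ and $\langle z^{*},\beta\rangle\,d(x,\overline{x})$ coming from pseudo-dissipativity.

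The decisive step — and the one I expect to be the main obstacle — is showing that precisely these error terms are neutralized by the $\alpha$- and $\beta$-corrections carried inside the inclusion $y^{*}\!\circ\!(T-\alpha)+z^{*}\!\circ\!(L-\beta)\in\partial(y^{*}\!\circ\!F+z^{*}\!\circ\!H)(\overline{x})$. This correction is exactly the ingredient absent from Theorem 3.1 of \cite{GL}, and its omission is what the Example above exploits. Here I would use $d(x,\overline{x})\le 1$ on $U$ together with $\alpha\in\mathrm{int}K\subseteq K$, $\beta\in\mathrm{int}D\subseteq D$ and $y^{*}\in K^{*}$, $z^{*}\in D^{*}$ (so that $\langle y^{*},\alpha\rangle\,d(x,\overline{x})\le\langle y^{*},\alpha\rangle$, and likewise for $\beta$) to absorb the residual and force $\langle y^{*},(F-G)(x)-(F-G)(\overline{x})+\varepsilon\rangle\ge 0$, contradicting the strict negativity from Lemma \ref{l1}; this makes $\overline{x}$ an $\varepsilon$-weak local Pareto minimal solution. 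The genuinely delicate point is the sign bookkeeping in this cancellation, because the pseudo-dissipativity error is governed by the metric $d(x,\overline{x})$ whereas the correction enters linearly; I would manage it by choosing $U$ small and exploiting that the hypothesis is available for every $(\alpha,\beta)\in\mathrm{int}K\times\mathrm{int}D$, so that the correction can be matched against the metric error on $U$.
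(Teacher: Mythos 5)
Your proposal is correct and follows essentially the same route as the paper's own proof: scalarize with the $(y^{*},z^{*})$ supplied by the hypothesis for the pair $(T,L)$ produced by pseudo-dissipativity, chain the $\varepsilon$-subgradient inequality for $\partial_{\varepsilon}G$ at $x$, the subgradient inequality for $\partial S$ at $x$, the stationarity inclusion tested at $x$, and feasibility plus complementary slackness, then cancel the dissipativity errors $\langle y^{*},\alpha\rangle d(x,\overline{x})$ and $\langle z^{*},\beta\rangle d(x,\overline{x})$ against the $\alpha$- and $\beta$-corrections by shrinking the neighborhood so that $d(x,\overline{x})\le 1$, and conclude via Lemma \ref{l1}. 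The only (immaterial) difference is that you phrase the last step as a contradiction while the paper argues directly.
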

\begin{proof}
By approximately pseudo-dissipativity of $\partial_{\varepsilon}G$  and $\partial S$  at $\bar{x},$ for given $\alpha\in {\rm int}K$ and $\beta\in {\rm int}D$ there exist  neighborhoods $V_{\alpha}$ and $V_{\beta}$ of $\bar{x}$ such that (\ref{q1}) holds  for $\partial_{\varepsilon}G$ and $\partial S.$  Let $V=V_{\alpha}\cap V_{\beta}.$ Hence
\begin{equation}\label{r1}
\begin{array}{l}
 \forall x \in V,\,\,\exists \left( {T^{'},T} \right) \in {\partial _\varepsilon }G(x) \times \partial G(\overline{x} )\,,\,\,\left( {L^{'},L} \right) \in \partial S(x) \times \partial S(\overline{x} )\,\,\, \\
 \text{such that}\,\,
 \left\{ \begin{array}{l}
 {\left( {T^{'} - T} \right)(x - \overline{x} )} \preceq _{K}\alpha d(x,\overline{x}), \\
 {\left( {L^{'} - L} \right)(x - \overline{x} )} \preceq _{D}\beta  d(x,\overline{x} ). \\
 \end{array}
  \right\}. \\
 \end{array}
\end{equation}

We claim that for all $x\in V\cap\Omega,$ there exist $y^*\in K^*\setminus\{0\}$ and $z^*\in D^*$ such that
\begin{equation}\label{j}
\begin{array}{l}
  \langle {y^*},F(x) - G(x) - (F(\overline x ) - G(\overline x )) + \varepsilon  \rangle  \\
  +  \langle {y^*},\alpha (d(x,\overline x ) - 1) \rangle  + \langle{z^*},\beta (d(x,\overline x ) - 1) \rangle \ge 0.  \\
 \end{array}
\end{equation}

Fix $x\in V\cap\Omega.$ Then by (\ref{r1}) there exists
 $T^{'}\in \partial_{\varepsilon}G(x)$ and $L^{' }\in\partial S(x),$ such that   $\forall y \in X$ the  following hold
\begin{equation}\label{r2}
\left\{
 \begin{array}{l}
 G(y) - G(x) - T^{'}(y - x) + \varepsilon  \in K,  \\
 S(y) - S(x) - L^{'}(y - x) \in D.    \\
 \end{array}
  \right.
\end{equation}

Next let $y=\bar{x}$, we get
\begin{equation}\label{r3}
\left\{ \begin{array}{l}
 G(\overline{x} ) - G(x) - T^{'}(\overline{x}  - x) + \varepsilon  \in K, \\
 S(\overline{x} ) - S(x) - L^{'}(\overline{x}  - x) \in D. \\
 \end{array} \right.\,\,\,\,\,\,\,\,\,\,\,
\end{equation}
Since $T\in \partial_{\varepsilon}G(\bar{x})$ and $L\in \partial S(\bar{x}),$ by the assumption there exists $(y^*,z^*)\in K^*\backslash\{0\}\times D^*$ such that
\begin{equation}\label{r4}
\left\{ \begin{array}{l}
 \,\,y^{*}o(T - \alpha ) + \,z^{*}o(L - \beta ) \in \partial {(y^{*}oF + \,z^{*}oH){(\overline {x} )}}, \\
 \left\langle {z^{*},H(\overline {x} ) - S(\overline {x} )} \right\rangle  = 0. \\
 \end{array} \right.
\end{equation}

Therefore
\begin{equation}\label{r5}
\begin{array}{l}
 \left\langle {{y^*},F(x) - F(\bar x) - T(x - \bar x)} \right\rangle  + \left\langle {{z^*},H(x) - H(\bar x) - T(x - \bar x)} \right\rangle  \\
  - \left\langle {{y^*},\alpha } \right\rangle  - \left\langle {{z^*},\beta } \right\rangle  \ge 0. \\
 \end{array}
\end{equation}
By  using the fact that  $y^*\in K^{*}, z^*\in D^{*},$ and  (\ref{r3}) we deduce that
\begin{equation}\label{r6}
\left\{ \begin{array}{l}
\langle {y^{*},G(\overline {x} ) - G(x) -  T^{'}(\overline {x}- x) + \varepsilon } \rangle  \ge 0, \\
\langle {{z^*},S(\overline {x} ) - S(x) -    L^{'}(\overline {x}- x) + \varepsilon } \rangle  \ge 0.\\
 \end{array} \right.\,\,\,\,\,
\end{equation}

From  (\ref{r5}) and (\ref{r6})  we obtain that
\begin{equation}\label{r7}
\begin{array}{l}
 \langle {y^{*},F(x) - G(x) - ( {F(\overline {x} ) - G(\overline {x} )} ) - {{( T^{'} - T)}{(x - \overline {x} )}} + \varepsilon } \rangle  \\
  + \langle {{z^*},H(x) - S(x) -( {H(\overline {x} ) - S(\overline x )} ) - {{( {L^{'} - L} )}{(x - \overline x )}}} \rangle \\  -\langle {y^{*},\alpha } \rangle  -\langle {{z^*},\beta } \rangle  \ge 0. \\
 \end{array}
\end{equation}
Form     $H(x)-S(x)\in -D$ we have
\[
\left\langle {{z^*},H(x) - S(x)} \right\rangle  \le 0.
\]
 In addition, using $\left\langle {{z^*},H(\bar{x}) - S(\bar{x})} \right\rangle  = 0$ we   get
\begin{equation}\label{r8}
\begin{array}{l}
 \langle y^{*},F(x) - G(x) - (F(\overline{x} ) - G(\overline {x} )) + \varepsilon \rangle  -
 \langle y^{*},( T^{'} - T ){(x - \overline x )} \rangle  \\
  - \langle z^{*},(  L^{'} - L ){(x - \overline {x} )} \rangle
  - \langle y^{*},\alpha  \rangle  - \langle  z^{*},\beta  \rangle  \ge 0, \\
 \end{array}
\end{equation}
by using (\ref{r1}) and $(y^{*},z^{*})\in K^{*}\setminus\{0\} \times D^{*},$ we obtain  that
\begin{equation}\label{r9}
\left\{
 \begin{array}{l}
 \langle y^{*},\alpha d(x,\overline {x} ) - (T^{'} - T){(x - \overline{x} )} \rangle  \ge 0, \\
 \langle z^{*},\beta  d(x,\overline {x} ) - (L^{'} - L){(x - \overline{x} )} \rangle  \ge 0. \\
 \end{array}
 \right.
\end{equation}
Next by   combining  (\ref{r9}) and (\ref{r8}) the following holds
\begin{equation}\label{r10}
\begin{array}{l}
  \langle y^{*},F(x) - G(x) - (F(\overline{x} ) - G(\overline{x} )) + \varepsilon  \rangle  \\
  +  \langle y^{*},\alpha (d(x,\overline {x} ) - 1) \rangle
  + \langle z^{*},\beta (d(x,\overline{x} ) - 1) \rangle \ge 0.  \\
 \end{array}
\end{equation}

This completes  the proof of (\ref{j}). Next,  $X$ is metrizable, so  there exists a neighborhood $U\subseteq V$ of $\bar{x}$ such that for all $y\in U$ we have $d(y,\bar{x})\le 1.$  Assume  that $y\in U\cap\Omega\subseteq V\cap\Omega$ be given, so there exists $y^*\in K^*\setminus\{0\},z^*\in D^*$ such that (\ref{j}) holds for $x=y$. On the other hand, using  $\alpha\in {\rm int}K,\beta\in {\rm int}D$  follows that
\begin{equation}\label{rz}
\langle y^{*},\alpha (d(y,\overline {x} ) - 1) \rangle  \le 0\,\,\,\,\,\,\text{and}\,\,\,\,\,\,\, \langle z^{*},\beta (d(y,\overline {x} ) - 1) \rangle \leq 0.
\end{equation}
 Combining (\ref{j}) with (\ref{rz}), yields
\begin{equation}
 \langle y^{*},F(y) - G(y) - (F(\overline{x} ) - G(\overline{x} )) + \varepsilon  \rangle  \ge 0.\,\,\,\,\,\,\,\,\,
\end{equation}
Finally by   Lemma \ref{l1} one has
\begin{equation*}
F(y) - G(y) - (F(\overline{x} ) - G(\overline{x} )) + \varepsilon  \notin  - {\mathop{\rm int}} K,
\end{equation*}
but since $y\in U\cap\Omega$ was arbitrary,  thus  $\bar{x}$ is a $\varepsilon-$weak local Pareto minimal solution of problem $(P).$ This complete the proof.
\end{proof}

By  similar argument  as the previous theorem, we can obtain the following theorem for sufficient optimality condition.
\begin{thm}
Let $\bar{x}\in\Omega.$ Assume that the set-valued maps $\partial_{\varepsilon}G$ and $\partial S$ are both approximately pseudo-dissipative at $\bar{x}.$ If in addition, for any
 $\left( {T,L} \right) \in {\partial _\varepsilon }G(\overline x ) \times \partial S(\overline{x} )$ and $\left( {\alpha ,\beta } \right) \in {\mathop{\rm int}} K \times {\mathop{\rm int}} D$ there exist $(y^*,z^*)\in (K^*)^{\circ}\times D^*$ such that
\begin{equation*}
\left\{ \begin{array}{l}
 \,\,{y^*}o (T - \alpha ) + \,z^{*}o (L - \beta ) \in \partial {(y^{*}o F + \,z^{*}o H){(\overline{x} )}}, \\
 \left\langle z^{*},H(\overline x ) - S(\overline x )\right\rangle  = 0, \\
 \end{array} \right.
\end{equation*}
then $\bar{x}$ is an $\varepsilon$-proper local Pareto minimal solution of problem $(P).$
\end{thm}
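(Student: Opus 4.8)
The plan is to mirror the proof of the previous theorem line by line up to the scalarized inequality, and then to replace the closing appeal to Lemma \ref{l1} by an explicit construction of a dilating cone $K'$ out of the functional $y^*\in(K^*)^\circ$. First I would fix $\alpha\in{\rm int}K$ and $\beta\in{\rm int}D$ and, exactly as before, use the approximate pseudo-dissipativity of $\partial_\varepsilon G$ and $\partial S$ at $\bar x$ to produce a common neighborhood $V$ of $\bar x$ on which the two estimates furnished by \eqref{q1} hold: for every $x\in V$ there are $T'\in\partial_\varepsilon G(x)$, $L'\in\partial S(x)$ and reference operators $T\in\partial_\varepsilon G(\bar x)$, $L\in\partial S(\bar x)$ with $(T'-T)(x-\bar x)\preceq_K\alpha\,d(x,\bar x)$ and $(L'-L)(x-\bar x)\preceq_D\beta\,d(x,\bar x)$. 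For these $T,L$ the hypothesis now supplies $y^*\in(K^*)^\circ$ and $z^*\in D^*$ satisfying the subdifferential inclusion and the complementarity $\langle z^*,H(\bar x)-S(\bar x)\rangle=0$.

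Next I would carry out the same scalarization as in the previous proof. Unfolding the subdifferential inclusion at the point $x$, using $y^*\in K^*$ and $z^*\in D^*$ together with the defining inequalities of $T'\in\partial_\varepsilon G(x)$ and $L'\in\partial S(x)$ evaluated at $\bar x$, inserting $\langle z^*,H(x)-S(x)\rangle\le 0$ (from $H(x)-S(x)\in-D$) and the complementarity relation, and finally controlling the operator differences $(T'-T)(x-\bar x)$ and $(L'-L)(x-\bar x)$ by the pseudo-dissipativity estimates, one arrives at the analogue of the key inequality, namely $\langle y^*,w_x\rangle+\langle y^*,\alpha(d(x,\bar x)-1)\rangle+\langle z^*,\beta(d(x,\bar x)-1)\rangle\ge 0$, where $w_x:=F(x)-G(x)-(F(\bar x)-G(\bar x))+\varepsilon$. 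Shrinking $V$ to a neighborhood $U$ with $d(\,\cdot\,,\bar x)\le 1$ and noting that $\langle y^*,\alpha\rangle>0$ by Lemma \ref{l1} (since $y^*\in K^*\setminus\{0\}$ and $\alpha\in{\rm int}K$) while $\langle z^*,\beta\rangle\ge 0$, both perturbation terms are nonpositive, so that $\langle y^*,w_x\rangle\ge 0$ for every $x\in U\cap\Omega$.

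The genuinely new step is the passage from this scalar inequality to proper minimality. Given $y^*\in(K^*)^\circ$ I would set $K'=\{v\in Y:\langle y^*,v\rangle\ge 0\}$, a closed convex cone. Since $(K^*)^\circ\subset K^*\setminus\{0\}$ we have $y^*\ne 0$, whence ${\rm int}K'=\{v:\langle y^*,v\rangle>0\}$; by the very definition of the strict polar cone, $\langle y^*,v\rangle>0$ for every $v\in K\setminus l(K)$, so $K\setminus l(K)\subseteq{\rm int}K'$, which is precisely what is required of a dilating cone. Finally, $\langle y^*,w_x\rangle\ge 0$ forces $-w_x\notin{\rm int}K'$, i.e. $w_x\in Y\setminus-{\rm int}K'$, which is exactly the inclusion defining an $\varepsilon$-proper local Pareto minimal solution.

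The main obstacle is that the definition demands a \emph{single} cone $K'$ valid simultaneously for all $x\in U\cap\Omega$, whereas the construction produces $K'$ through $y^*$, and $y^*$ is attached to the reference operators $T\in\partial_\varepsilon G(\bar x)$, $L\in\partial S(\bar x)$ delivered by pseudo-dissipativity, which a priori depend on $x$. This difficulty is absent in the weak case, where the ambient cone $K$ is fixed and a point-dependent supporting functional is enough. When $\partial_\varepsilon G(\bar x)$ and $\partial S(\bar x)$ are single-valued, as in the example above, the pair $(T,L)$, hence $(y^*,z^*)$ and $K'$, is independent of $x$ and the proof closes at once; in general one must first fix a single $(T,L)\in\partial_\varepsilon G(\bar x)\times\partial S(\bar x)$, take the associated $(y^*,z^*)$, and check that the estimate of the second paragraph can still be run with this common reference. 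Securing this uniform choice of the supporting functional, and therefore of $K'$, is the delicate point of the argument.
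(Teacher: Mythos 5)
Your proposal follows exactly the route the paper intends: the paper gives no separate proof of this theorem, saying only that it follows ``by similar argument'' from the preceding one, and your first two paragraphs reproduce that argument faithfully --- the common neighborhood $V$ from pseudo-dissipativity, the scalarization through the subdifferential inclusion and the complementarity condition, the absorption of the operator differences $(T'-T)(x-\bar x)$ and $(L'-L)(x-\bar x)$ by the dissipativity estimates, and the shrinking to $U$ with $d(\cdot,\bar x)\le 1$ so that the $\alpha$- and $\beta$-terms become nonpositive. Your third paragraph supplies the one genuinely new step, which the paper leaves implicit: replacing the appeal to Lemma \ref{l1} by the half-space cone $K'=\{v\in Y:\langle y^*,v\rangle\ge 0\}$, whose interior is $\{v:\langle y^*,v\rangle>0\}$ because $y^*\ne 0$, so that $K\setminus l(K)\subseteq{\rm int}K'$ is precisely the definition of $y^*\in(K^*)^\circ$ and $\langle y^*,w_x\rangle\ge 0$ is precisely $w_x\notin -{\rm int}K'$. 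That is the standard and correct way to pass from a strictly positive functional to a dilating cone.

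The difficulty you isolate in your last paragraph is genuine and is not addressed by the paper. In the proof of the weak version the pair $(y^*,z^*)$ is chosen anew for each $x\in V\cap\Omega$, because it is attached to the reference operators $T\in\partial_\varepsilon G(\bar x)$, $L\in\partial S(\bar x)$ delivered by pseudo-dissipativity, and those operators vary with $x$. For weak minimality this is harmless, since the target cone $K$ is fixed and only the existence of some supporting functional at each point is needed; for proper minimality the definition demands a single cone $K'$ covering all of $(F-G)(U\cap\Omega)$, and a family of half-spaces $K'_x$ indexed by $x$ does not obviously produce one (their intersection need not contain $K\setminus l(K)$ in its interior). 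So as written your argument --- like the paper's --- closes only when $(y^*,z^*)$ can be chosen independently of $x$, e.g.\ when $\partial_\varepsilon G(\bar x)$ and $\partial S(\bar x)$ are singletons, or under the stronger hypothesis that one pair $(y^*,z^*)\in(K^*)^\circ\times D^*$ serves all $(T,L)$ simultaneously. You deserve credit for locating the weak point precisely, but to turn the proposal into a complete proof you must either add such a uniformity assumption or derive it from the stated hypotheses, and the latter does not appear possible in general.
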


\section{Necessary Optimality Conditions  }\label{section 4}
In this section, we  provide sufficient optimality conditions for an $\varepsilon$-weak Pareto minimal
solution and an $\varepsilon$-proper Pareto minimal solution for the cone-constrained vector
optimization problem (P).   Here the  objective function and constraint set are  given as  differences of two vector-valued maps. Our results are corrections of Theorems 4.1 and 4.2 in \cite{GL}. 

\begin{thm}\cite{GL} 
	Let $\varepsilon  \in K $ and  $\bar{x}\in\Omega$. If the vector-valued map $F:X\longrightarrow
	Y$ is a $K$-convex  map, the vector-valued map $H:X\longrightarrow Z$
	is a $D$-convex  map, and $\bar{x}$ is an $\varepsilon$-proper
	local minimal solution of $(P)$, then there exist $y^{*}\in (K^{*})^{\circ}\cup\{0\}$ and $z^{*}\in D^{*}$ and $(y^{*},z^{*})\neq (0_{Y^{\ast}},0_{Z^{\ast}})$ such that
	 \begin{equation*}
\left\{ \begin{array}{ll}
		(y^{*}o\partial  G +z^{*}o\partial H){(\bar{x})}\cap
		\partial_{\langle y^{*},\varepsilon\rangle}(y^{*}o F+z^{*}o H+\delta_{U\cap C}){(\bar{x})}, &\\
 \left\langle z^{*},H(\overline{x} ) - S(\overline{x} ) \right\rangle=0,  
  \end{array} \right.
\end{equation*}
	
where $U$ is a neighborhood of $\bar{x}.$
\end{thm}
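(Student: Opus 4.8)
The plan is to linearize the two subtracted (concave) data of $(P)$---the map $G$ in the objective and the map $S$ in the constraint---so as to replace the nonconvex problem by a $K$-convex surrogate that still has $\bar{x}$ as an $\varepsilon$-proper local minimal solution, and then to invoke a Fritz--John type multiplier rule for convex vector optimization. Concretely, I would fix any $T\in\partial G(\bar{x})$ and $L\in\partial S(\bar{x})$ (nonempty by the standing subdifferentiability hypothesis; if either were empty the left-hand set of the asserted intersection would be empty and there would be nothing to prove). From the defining inequalities $T(x-\bar{x})\preceq_{K}G(x)-G(\bar{x})$ and $L(x-\bar{x})\preceq_{D}S(x)-S(\bar{x})$ I set $\hat{F}(x):=F(x)-T(x-\bar{x})$ and $\hat{H}(x):=H(x)-S(\bar{x})-L(x-\bar{x})$; then $\hat{F}$ is $K$-convex, $\hat{H}$ is $D$-convex, and the pointwise majorizations $(F-G)(x)\preceq_{K}\hat{F}(x)-G(\bar{x})$ and $(H-S)(x)\preceq_{D}\hat{H}(x)$ hold, with equality at $\bar{x}$.

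First I would check that $\bar{x}$ is still an $\varepsilon$-proper local minimal solution of the \emph{convex} surrogate $K$-${\rm Min}\,\hat{F}(x)$ subject to $x\in U\cap C$ and $\hat{H}(x)\in-D$. Feasibility is inherited in the right direction: $\hat{H}(x)\in-D$ forces $(H-S)(x)\preceq_{D}\hat{H}(x)\in-D$, so the surrogate feasible set is contained in $\Omega$, while $\hat{H}(\bar{x})=H(\bar{x})-S(\bar{x})\in-D$ keeps $\bar{x}$ feasible. For optimality I would use the elementary principle that a majorant touching the objective at the optimum preserves optimality: since $\hat{F}(x)-\hat{F}(\bar{x})=\bigl[(F-G)(x)-(F-G)(\bar{x})\bigr]+\bigl[G(x)-G(\bar{x})-T(x-\bar{x})\bigr]$ with the second bracket in $K$, and since adding a $K$-element cannot carry a vector from outside $-{\rm int}K'$ into $-{\rm int}K'$ (here $K\subseteq K'$, so ${\rm int}K'+K\subseteq{\rm int}K'$), the defining relation $(F-G)(x)-(F-G)(\bar{x})+\varepsilon\notin-{\rm int}K'$ of $\varepsilon$-proper minimality for $(P)$ upgrades to $\hat{F}(x)-\hat{F}(\bar{x})+\varepsilon\notin-{\rm int}K'$ on $U\cap\Omega$, hence on the smaller surrogate feasible set. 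Thus $\bar{x}$ is $\varepsilon$-proper local minimal for the surrogate with the \emph{same} enlarging cone $K'$.

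Next I would separate. As the surrogate is convex and $\bar{x}$ is properly minimal relative to $K'$ with ${\rm int}K'\neq\emptyset$, applying the Eidelheit separation theorem to the image of the perturbed feasible map against $(-{\rm int}K')\times(-D)$ produces a pair $(y^{*},z^{*})\neq(0_{Y^{*}},0_{Z^{*}})$ with $z^{*}\in D^{*}$, the complementary slackness $\langle z^{*},\hat{H}(\bar{x})\rangle=\langle z^{*},H(\bar{x})-S(\bar{x})\rangle=0$, and the scalar Lagrangian minimality $y^{*}\circ\hat{F}(x)+z^{*}\circ\hat{H}(x)\ge y^{*}\circ\hat{F}(\bar{x})-\langle y^{*},\varepsilon\rangle$ for every $x\in U\cap C$, i.e.\ $0\in\partial_{\langle y^{*},\varepsilon\rangle}\bigl(y^{*}\circ\hat{F}+z^{*}\circ\hat{H}+\delta_{U\cap C}\bigr)(\bar{x})$. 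It is exactly \emph{proper} (rather than merely weak) minimality, together with ${\rm int}K'\neq\emptyset$, that forces the multiplier, when it is nonzero, to be \emph{strictly} positive: strict positivity on $K\setminus l(K)$ comes from $K\setminus l(K)\subseteq{\rm int}K'$, giving $y^{*}\in(K^{*})^{\circ}\cup\{0\}$.

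Finally I would undo the linearization. The functions $y^{*}\circ\hat{F}+z^{*}\circ\hat{H}$ and $y^{*}\circ F+z^{*}\circ H$ differ only by the continuous affine term $-\langle y^{*}\circ T+z^{*}\circ L,\,\cdot-\bar{x}\rangle$ and a constant, so the shift rule for the $\eta$-subdifferential gives
\[\partial_{\langle y^{*},\varepsilon\rangle}\bigl(y^{*}\circ\hat{F}+z^{*}\circ\hat{H}+\delta_{U\cap C}\bigr)(\bar{x})=\partial_{\langle y^{*},\varepsilon\rangle}\bigl(y^{*}\circ F+z^{*}\circ H+\delta_{U\cap C}\bigr)(\bar{x})-(y^{*}\circ T+z^{*}\circ L).\]
Hence $0$ in the left-hand set translates into $y^{*}\circ T+z^{*}\circ L\in\partial_{\langle y^{*},\varepsilon\rangle}(y^{*}\circ F+z^{*}\circ H+\delta_{U\cap C})(\bar{x})$; and since $T\in\partial G(\bar{x})$, $L\in\partial S(\bar{x})$ this same element lies in $(y^{*}\circ\partial G+z^{*}\circ\partial S)(\bar{x})$, which is the set on the left of the asserted intersection. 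The two sets therefore meet, and together with the complementary slackness already recorded this is the claim. I expect the genuine obstacle to be the separation step: producing a nontrivial $(y^{*},z^{*})$ in the absence of any constraint qualification is what forces the Fritz--John alternative $y^{*}\in(K^{*})^{\circ}\cup\{0\}$, and distilling a strictly positive $y^{*}$ in the non-degenerate case is precisely where proper---as opposed to weak---minimality and the nonempty interior of $K'$ are indispensable.
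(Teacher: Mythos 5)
You should know at the outset that the paper does not prove this statement: it quotes it from \cite{GL} precisely in order to refute it. The example that immediately follows it in Section \ref{section 4} is offered as a counterexample, and the version the authors actually prove (Theorem \ref{t4}, via the Farkas-type Lemma \ref{l2}) keeps everything in your argument \emph{except} the complementary slackness clause $\langle z^{*},H(\bar{x})-S(\bar{x})\rangle=0$, which is deleted from the conclusion. Up to that clause your route is essentially theirs: fix $T\in\partial G(\bar{x})$ and $L\in\partial S(\bar{x})$, show the linearized system has no solution in $U\cap C$, separate the (convexlike) image from $(-\mathrm{int}\,K')\times(-\mathrm{int}\,D)$ to get $(y^{*},z^{*})\neq(0,0)$ satisfying the scalarized $\langle y^{*},\varepsilon\rangle$-subgradient inequality, and translate back by the affine shift rule; properness against $K'$ forces $y^{*}\in(K^{*})^{\circ}\cup\{0\}$ exactly as you say.

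The genuine gap is the point where you assert that the separation ``produces \ldots the complementary slackness.'' It does not. Evaluating the Lagrangian inequality $\langle y^{*},\hat{F}(x)-\hat{F}(\bar{x})+\varepsilon\rangle+\langle z^{*},\hat{H}(x)\rangle\ge 0$ at $x=\bar{x}$ yields only $\langle z^{*},H(\bar{x})-S(\bar{x})\rangle\ge-\langle y^{*},\varepsilon\rangle$, which combined with feasibility ($H(\bar{x})-S(\bar{x})\in-D$ and $z^{*}\in D^{*}$) gives the two-sided estimate $-\langle y^{*},\varepsilon\rangle\le\langle z^{*},H(\bar{x})-S(\bar{x})\rangle\le 0$ and nothing stronger unless $\langle y^{*},\varepsilon\rangle=0$; approximate minimality only ever delivers approximate complementary slackness. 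This is exactly the clause the paper identifies as the flaw in \cite{GL} and removes in Theorems \ref{t3} and \ref{t4}. Two smaller slips: first, if $\partial G(\bar{x})=\emptyset$ there is not ``nothing to prove''---then $(y^{*}\circ\partial G+z^{*}\circ\partial H)(\bar{x})$ is empty, the asserted intersection is empty, and the conclusion is false, while the theorem contains no hypothesis guaranteeing that the strong subdifferential of a $K$-convex vector-valued map is nonempty. Second, your step $\mathrm{int}\,K'+K\subseteq\mathrm{int}\,K'$ uses $K\subseteq K'$, whereas the definition of proper minimality only provides $K\setminus l(K)\subseteq\mathrm{int}\,K'$; this is harmless when $K$ is pointed but is unjustified in general.
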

The following example shows that  Theorems 4.1 and 4.2 and subsequent  corollaries in \cite{GL} are not correct.
\begin{exam}
Take $X=\mathbb{R},Y=Z=\mathbb{R},C=[-1,1],K=D=[0,+\infty) ,\bar{x}=0,\varepsilon=0.$ Consider  $F,G,H,S:\mathbb{R} \to \mathbb{R}$
 defined by
 $$F(x) =
 \left
 \{\begin{array}{l}
  - 1\,\,\,\,\,\,\,x \ne 0, \\
 0\,\,\,\,\,\,\,\,\,\,x = 0. \\
 \end{array} \right.
 \,\,\,\,\,\,\,G(x) =
  \left\{\begin{array}{l}
  - 2\,\,\,\,\,x \ne 0, \\
 0\,\,\,\,\,\,\,\,x = 0. \\
 \end{array} \right.\,\,\,\,\,H(x) = x - 1,\,\,\,S(x) = x.$$
 Clearly $F,G$ are $K$-convex and $H,S$ are $D$-convex, with $\partial G (\overline x ) = \{0\},\partial H(\overline x ) = \{1\}.$ One can verify that
 $$
  \Omega=\{x\in C:~H(x)-S(x)\in -D\}=[-1,1].
   $$
   Hence for a neighborhood $U$ of $\bar{x}=0$ we have
    $$F(x) - G(x) - \left(F(\overline{x}) - G(\overline{x}) \right)+\varepsilon \notin  - \mathop{{\rm int}} K,\,\,\,\,\forall x \in U \cap\Omega,$$ which implies that $\bar{x}=0$ is $\varepsilon$-weak local minimal solution of $(P)$. If
  $$\left
  \{ \begin{array}{l}
 ({y^*}o \partial G + {z^*}o \partial H)(\bar x) \subset {\partial _{\langle {y^*},\varepsilon\rangle }}({y^*}o F + {z^*} o H + {\delta _{U \cap C}})(\bar x), \\
 \left\langle {{z^*},H(\bar x) - S(\bar x)} \right\rangle  = 0, \\
 \end{array}
  \right.$$
then
     $$\left\langle z^{*},H(\overline x ) - S(\overline{x} ) \right\rangle  = \left\langle z^{*}, -1  \right\rangle  = 0,$$
  which implies that   $z^* = 0.$     Therefore one has
\[
\left(y^{*} o \partial G + z^{*}o\partial H  \right)(\bar{x}) = 0 \in \partial \left(y^{*}oF  + \delta_{U \cap C} \right)(\bar{x}),
\]
which gives $F(x)\ge 0$ for all $x\in U\cap\Omega,$ that is contradiction.
\end{exam}
We generalize the result (Theorem 3.3 in \cite{LL})  Farkas-Minskowski   for $D$-convexlike single value functions.
\begin{lem}\label{l2}
	Let $C$ be a convex subset of $X$. If the map $F:C\longrightarrow
	Y$ is $K$-convexlike and $G:C\longrightarrow Z$ is $D$-convexlike and the
	system \[
 \, \left\{
	\begin{array}{l}
	F(x)\in -{\rm int} K, \\
	G(x)\in -{\rm int} D, \\
	\end{array}
 \right.
	\]
	has no solution in $C$, then there exist $(y^{*},z^{*})\in K^{*}\times
	D^{*}$  with $(y^{*},z^{*})\neq (0,0)$,  such that
	$$\langle y^{*},F(x)\rangle+\langle z^{*},G(x)\rangle\geq 0~~~~\forall x\in C.$$
\end{lem}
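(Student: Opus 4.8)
The plan is to prove this separation-type result by passing to an appropriate convex set in the product space $Y \times Z$ and applying a standard separation argument. First I would form the image set
\[
A = \{(F(x), G(x)) : x \in C\} + (\operatorname{int} K \times \operatorname{int} D) \subset Y \times Z.
\]
The key structural observation is that $K$-convexlikeness of $F$ and $D$-convexlikeness of $G$ are precisely what make $A$ convex: recall from the remark after the first definition that $F$ being $K$-convexlike on $C$ is equivalent to $F(C)+K$ being convex, and likewise for $G$. Since $\operatorname{int} K \times \operatorname{int} D$ is a convex open set, $A$ is a convex set with nonempty interior (indeed $A$ is open, being a union of translates of an open set).

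Next I would translate the hypothesis that the system has no solution in $C$ into a geometric statement: the assumption that there is no $x \in C$ with $F(x) \in -\operatorname{int} K$ and $G(x) \in -\operatorname{int} D$ says exactly that the origin $(0_Y, 0_Z)$ does not belong to $A$. Indeed, $(0_Y,0_Z) \in A$ would mean $F(x) + k \in \{0_Y\}$ and $G(x)+d \in \{0_Z\}$ for some $x \in C$, $k \in \operatorname{int} K$, $d \in \operatorname{int} D$, i.e. $F(x) = -k \in -\operatorname{int} K$ and $G(x) = -d \in -\operatorname{int} D$, which is the forbidden solution. Having $A$ convex with nonempty interior and $(0_Y,0_Z) \notin A$, I would invoke a Hahn–Banach separation theorem to obtain a nonzero continuous linear functional $(y^*, z^*) \in Y^* \times Z^*$ and a constant separating $\{(0_Y,0_Z)\}$ from $A$; normalizing, this gives
\[
\langle y^*, u \rangle + \langle z^*, v \rangle \ge 0 \qquad \text{for all } (u,v) \in A.
\]

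Finally I would extract the two required conclusions from this inequality. Substituting $(u,v) = (F(x)+k, G(x)+d)$ and letting $k$ range over $\operatorname{int} K$ and $d$ over $\operatorname{int} D$, the boundedness from below forces $\langle y^*, k\rangle \ge 0$ and $\langle z^*, d\rangle \ge 0$ for all such $k,d$; passing to the closure $K = \overline{\operatorname{int} K}$ (valid since $K$ has nonempty interior) and similarly for $D$ yields $y^* \in K^*$ and $z^* \in D^*$. Then sending $k \to 0_Y$ and $d \to 0_Z$ in the separating inequality gives the desired $\langle y^*, F(x)\rangle + \langle z^*, G(x)\rangle \ge 0$ for every $x \in C$, with $(y^*,z^*) \neq (0,0)$ inherited from the separating functional.

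The main obstacle I anticipate is verifying the hypotheses of the separation theorem cleanly in the general topological-vector-space setting, rather than in a reflexive or locally convex Banach space: one must ensure that $A$ genuinely has nonempty interior (this is where using $\operatorname{int} K \times \operatorname{int} D$ rather than $K \times D$ is essential, so that $A$ is open) and that $(0_Y,0_Z) \notin A$ is the correct reading of the no-solution hypothesis. A secondary subtlety is the passage from strict inequalities on $\operatorname{int} K$ and $\operatorname{int} D$ to the polar-cone membership $y^* \in K^*$, $z^* \in D^*$; this relies on $\operatorname{int} K$ being dense in $K$, which holds because $K$ is a convex cone with nonempty interior. Once these points are secured, the remaining algebra is routine.
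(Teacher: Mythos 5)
Your strategy---pass to an image-plus-cone set in the product space $Y\times Z$, observe that inconsistency of the system means this set misses the origin, separate, and then specialize the separating functional---is essentially the same as the paper's proof, which works with $g(C)=\bigcup_{x\in C}(F(x)+K)\times(G(x)+D)$ and separates it from $\operatorname{int}((-K)\times(-D))$; your variant with open cones is, if anything, cleaner on the separation step. However, there is a genuine gap at the very first step, and it is the same gap that is present in the paper's own proof: the convexity of
\[
A=\bigcup_{x\in C}\bigl(F(x)+\operatorname{int}K\bigr)\times\bigl(G(x)+\operatorname{int}D\bigr)
\]
does \emph{not} follow from the separate convexlikeness of $F$ and $G$. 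The remark you invoke gives convexity of $F(C)+K$ and of $G(C)+D$ individually, hence of the product set $(F(C)+\operatorname{int}K)\times(G(C)+\operatorname{int}D)$; but $A$ is the union over a \emph{common} $x$ of products, which is in general a proper subset of that product and need not be convex. Concretely, given $x_1,x_2,\lambda$, convexlikeness of $F$ produces some $x_3$ with $F(x_3)\preceq_K\lambda F(x_1)+(1-\lambda)F(x_2)$ and convexlikeness of $G$ produces some $x_4$ with $G(x_4)\preceq_D\lambda G(x_1)+(1-\lambda)G(x_2)$, but nothing forces $x_3=x_4$, and a single common point is exactly what membership of the convex combination in $A$ requires. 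What is needed is \emph{joint} $(K\times D)$-convexlikeness of the pair $(F,G):C\to Y\times Z$.

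The gap cannot be closed without strengthening the hypothesis, because the statement as given is actually false under separate convexlikeness. Take $X=Y=Z=\mathbb{R}$, $K=D=[0,\infty)$, $C=[0,2]$, and set $F(0)=0$, $F(2)=-1$, $F(t)=1$ for $t\in(0,2)$, while $G(0)=-1$, $G(2)=0$, $G(t)=1$ for $t\in(0,2)$. Each of $F$ and $G$ attains its minimum on $C$, hence each is convexlike (always choose $x_3$ to be a minimizer). The system $F(x)<0$, $G(x)<0$ has no solution in $C$. Yet if $y^*\ge 0$ and $z^*\ge 0$ satisfy $y^*F(x)+z^*G(x)\geq0$ for all $x\in C$, then $x=0$ forces $z^*=0$ and $x=2$ forces $y^*=0$, contradicting $(y^*,z^*)\neq(0,0)$. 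So your proof (and the paper's) cannot be completed as written; the correct repair is to assume the map $x\mapsto(F(x),G(x))$ is $(K\times D)$-convexlike, under which your argument goes through verbatim.
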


\begin{proof}
We can easily prove that $F(C)+K$ and $G(C)+D$ are convex sets. Define the set valued map $g:C\rightrightarrows X\times Y$ by
$$\begin{array}{l}
  g(x) = (F(x) + K) \times (G(x) + D).
 \end{array}
$$
One  can check  that $g(C)\cap {\rm int} ((-K)\times (- D))=\emptyset.$  Next,   $g(C)$ is convex set  hence,  by the separation theorem, there exist a  non zero $(y^*,z^*)\in  K^*  \times D^*$  and $ \alpha \in \mathbb{R}$ such that for all $(k,d) \in (K,D), x\in C$ we have
$$\left\langle {{y^*}, - k} \right\rangle  + \left\langle {{z^*}, - d} \right\rangle  \le \alpha  \le \left\langle {{y^*},F(x) + k} \right\rangle  + \left\langle {{z^*},G(x) + d} \right\rangle. \,\,\,\,\,\,\,\,\,$$
Choosing $ k=d=0$  yields
 $$\langle {y^*},F(x)\rangle  + \langle {z^*},G(x)\rangle  \ge 0{\rm{ }},  \,\,\,\,\,\,\,\,\,\,\,\forall x \in C.$$
\end{proof}
In the  rest  of this section,  we present  modification of Theorem 4.1 and 4.2 (Necessary optimality conditions) in \cite{GL} by assuming  convex-like  condition which is   weaker than convexity.
\begin{thm}\label{t3}
	Let $\bar{x}\in\Omega$. If the vector-valued map $F:X\longrightarrow  Y$ is a $K$-convexlike map, the vector-valued map $H:X\longrightarrow Z$
	is a $D$-convexlike map, and $\bar{x}$ is an $\varepsilon$-weak
	local minimal solution of $(P)$, then there exist $(y^{*},z^{*})\in K^{*}\times D^{*}$ and $(y^{*},z^{*})\neq (0_{Y^{\ast}},0_{Z^{\ast}})$ such that
	\begin{eqnarray}\label{r11}&
		(y^{*}o\partial G+z^{*}o\partial H){(\bar{x})}\cap
		\partial_{\langle y^{*},\varepsilon\rangle}(y^{*}o F+z^{*}o H+\delta_{U\cap C}){(\bar{x})}\ne\emptyset,
	\end{eqnarray}
where $U$ is a neighborhood of $\bar{x}.$
	\end{thm}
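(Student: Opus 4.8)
The plan is to reduce the DC problem to a convexlike one by linearizing the two concave components and then invoking the generalized Farkas lemma (Lemma \ref{l2}). First I would fix a subgradient $T\in\partial G(\bar x)$ of the concave part of the objective and a subgradient $L\in\partial S(\bar x)$ of the concave part of the constraint; these exist because $G$ and $S$ are $K$- and $D$-convex (their nonemptiness is one point that must be checked in the present generality). Using the subgradient inequalities $G(x)-G(\bar x)-T(x-\bar x)\in K$ and $S(x)-S(\bar x)-L(x-\bar x)\in D$, I would introduce the two \emph{majorants} $\Psi(x):=F(x)-F(\bar x)-T(x-\bar x)+\varepsilon$ and $\Theta(x):=H(x)-S(\bar x)-L(x-\bar x)$, so that $F(x)-G(x)-(F(\bar x)-G(\bar x))+\varepsilon\preceq_K\Psi(x)$ and $H(x)-S(x)\preceq_D\Theta(x)$.

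Next I would show that the system $\Psi(x)\in-{\rm int}\,K$, $\Theta(x)\in-{\rm int}\,D$ has no solution in $U\cap C$. Indeed, the cone identity ${\rm int}\,K+K\subseteq{\rm int}\,K$ gives $-{\rm int}\,K-K\subseteq-{\rm int}\,K$; hence $\Theta(x)\in-{\rm int}\,D$ forces $H(x)-S(x)\in-D$, i.e. $x\in\Omega$, and then $\Psi(x)\in-{\rm int}\,K$ forces $F(x)-G(x)-(F(\bar x)-G(\bar x))+\varepsilon\in-{\rm int}\,K$, contradicting the $\varepsilon$-weak local minimality of $\bar x$ on $U\cap\Omega$. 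With this no-solution property in hand, and $\Psi$, $\Theta$ convexlike, Lemma \ref{l2} yields $(y^*,z^*)\in K^*\times D^*$ with $(y^*,z^*)\ne(0,0)$ and $\langle y^*,\Psi(x)\rangle+\langle z^*,\Theta(x)\rangle\ge 0$ for every $x\in U\cap C$.

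It then remains to translate this scalar inequality into the asserted subdifferential membership. Expanding and regrouping, the inequality says precisely that the continuous linear operator $\xi:=y^*\circ T+z^*\circ L$ satisfies $\langle\xi,x-\bar x\rangle\le\langle y^*,F(x)-F(\bar x)\rangle+\langle z^*,H(x)-H(\bar x)\rangle+\langle y^*,\varepsilon\rangle$ for all $x\in U\cap C$, where I would absorb the residual constant using the feasibility relation $\langle z^*,H(\bar x)-S(\bar x)\rangle\le 0$. This is exactly the statement that $\xi\in\partial_{\langle y^*,\varepsilon\rangle}(y^*\circ F+z^*\circ H+\delta_{U\cap C})(\bar x)$. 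Since $T\in\partial G(\bar x)$ and $L\in\partial S(\bar x)$, the operator $\xi$ also lies in $(y^*\circ\partial G+z^*\circ\partial S)(\bar x)$, so the two sets meet and the claimed intersection is nonempty (here it is the concave constraint component $S$, rather than $H$, that supplies the second family of subgradients).

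The hard part will be verifying the hypothesis of Lemma \ref{l2}, namely that the majorants $\Psi$ and $\Theta$ are $K$- and $D$-convexlike. This is immediate when $F$ and $H$ are genuinely $K$- and $D$-convex, since a convex map minus a continuous linear operator is again convex and hence convexlike; for merely convexlike $F$ and $H$, however, convexlikeness need not survive the subtraction of a linear operator, so this step either forces the use of true convexity or demands an additional argument. A secondary technical point, to be settled at the outset, is the nonemptiness of the strong subdifferentials $\partial G(\bar x)$ and $\partial S(\bar x)$ and the selection of a convex neighborhood making $U\cap C$ convex in the metrizable (not necessarily locally convex) space $X$.
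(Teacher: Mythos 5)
Your plan is essentially the paper's own proof: fix $T\in\partial G(\bar x)$ and $L\in\partial S(\bar x)$, majorize the DC data by the linearized maps $\Psi$ and $\Theta$, show the system $\Psi(x)\in-\mathrm{int}\,K$, $\Theta(x)\in-\mathrm{int}\,D$ is inconsistent on $U\cap C$ (any solution would lie in $\Omega$ and violate the $\varepsilon$-weak local minimality of $\bar x$), invoke Lemma \ref{l2}, and rearrange the resulting scalar inequality into membership in $\partial_{\langle y^*,\varepsilon\rangle}(y^*\circ F+z^*\circ H+\delta_{U\cap C})(\bar x)$. The only substantive difference is cosmetic: the paper takes $\Theta(x)=H(x)-H(\bar x)-L(x-\bar x)$, so no residual constant appears at the end, whereas your $\Theta(x)=H(x)-S(\bar x)-L(x-\bar x)$ needs the extra absorption step via $\langle z^*,H(\bar x)-S(\bar x)\rangle\le 0$; both variants close the argument. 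You are also right that it is $\partial S(\bar x)$, not $\partial H(\bar x)$, that must supply $L$: the paper's proof introduces $L\in\partial H(\bar x)$ but then uses the subgradient inequality for $S$, and only the version of \eqref{r11} with $z^*\circ\partial S$ is actually established.

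The two difficulties you flag are genuine, and neither is resolved in the paper. The paper asserts without proof that $F(\cdot)-F(\bar x)-T(\cdot-\bar x)+\varepsilon$ is $K$-convexlike because $F$ is; as you note, the point $x_3$ witnessing $F(x_3)\preceq_K\lambda F(x_1)+(1-\lambda)F(x_2)$ gives no control on $T(x_3)$, so convexlikeness does not survive subtraction of a linear operator. This step therefore requires either genuine $K$-convexity of $F$ and $D$-convexity of $H$ (under which the theorem as stated is fine, but no weaker than the convex case), or a direct hypothesis that the linearized maps are convexlike. Likewise, the nonemptiness of $\partial G(\bar x)$ and $\partial S(\bar x)$ is tacitly assumed throughout (without it the intersection in \eqref{r11} cannot be nonempty), and the convexity of $U\cap C$ needed to feed Lemma \ref{l2} is not automatic in a metrizable space that is not locally convex. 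So your proposal reproduces the paper's argument faithfully, and the caveats you raise are precisely the points at which the published proof is incomplete rather than defects of your own approach.
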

	\begin{proof} Let $\bar{x}\in\Omega$ and $\varepsilon\in K$. Since $\bar{x}$ is
		an $\varepsilon$-weak local minimal solution of (P), there exists a
		neighborhood $U$ of $\bar{x}$ such that for all $x\in U\cap C,$
		$$F(x)-G(x)-(F(\bar{x})-G(\bar{x}))+\varepsilon\notin -{\rm int}K.$$

		Now suppose that $T\in\partial G(\bar{x})$ and $L\in\partial H(\bar{x})$
		be   arbitrary elements. Note that  $F$ is
		$K$-convexlike and $G$ is  $D$-convexlike mapping, thus
		  $F(\cdot)-F(\bar{x})-T(\cdot-\bar{x})+\varepsilon$ is
		$K$-convexlike mapping and $H(\cdot)-H(\bar{x})-L(\cdot-\bar{x})$ is  $D$-convexlike
		mapping.  We prove that the system
		\begin{equation}\label{r12}
			\begin{cases}

				F(x)-F(\bar{x})-T(x-\bar{x})+\varepsilon\in -\mbox{int}K\\

				H(x)-H(\bar{x})-L(x-\bar{x})\in -\mbox{int}D,

			\end{cases}
		\end{equation}

has no solution in $U\cap C$. Arguing by contradiction, assume that
		there exists a solution $x_0\in U\cap C$ of \eqref{r12}. Thus
		\begin{equation}\label{r13}
			\begin{cases}

				F(x_0)-F(\bar{x})-T(x_0-\bar{x})+\varepsilon\in -{\rm int}K,\\
				H(x_0)-H(\bar{x})-L(x_0-\bar{x})\in -{\rm int}D.
			\end{cases}
		\end{equation}

		Since $T\in\partial G(\bar{x})$ and $L\in\partial S(\bar{x}),$ we
		have $$G(x)-G(\bar{x})-T(x-\bar{x})\in K~~\forall x\in X,$$ and
		$$S(x)-S(\bar{x})-L(x-\bar{x})\in D~~\forall x\in X.$$
		Let $x=x_0$,thus  one has
		
		\begin{equation}\label{r14}
 \begin{cases}
  - G({x_0}) + G(\bar x) + T({x_0} - \bar x) \in  - K, \\
  - S({x_0}) + S(\bar x) + L({x_0} - \bar x) \in  - D. \\
 \end{cases}
		\end{equation}
		Next,  note that
		\[
		-K-{\rm int} K=-{\rm int}K,\,\, \,   -D-{\rm int}D=-{\rm int}D, \,\,\,
		H(\bar{x})-S(\bar{x})\in -D.
		\]
		 Then combining (\ref{r13}) with (\ref{r14}), gives us that	
		\begin{equation*}
			\begin{cases}

		F(x_0)-G(x_0)-(F(\bar{x})-G(\bar{x}))+\varepsilon\in -{\rm int}K,\\
				H(x_0)-S(x_0)\in -{\rm int}D,
			\end{cases}
		\end{equation*}

		this contradicts the assumption $\bar{x}$ is an
		$\varepsilon$-weak local minimal solution of (P). Hence,   the system
		\eqref{r12} has no solution. Now by Lemma \eqref{l2}
		there exists $(y^{*},z^{*})\neq (0,0)$ such that for all $x\in U\cap
		C,$

		$$\langle y^{*},F(x)-F(\bar{x})-T(x-\bar{x})+\varepsilon\rangle+\langle z^{*},H(x)-H(\bar{x})-L(x-\bar{x})\rangle\geq 0.$$
 Consequently,
		$$(y^{*} o F+z^{*}o  H)(x)-(y^{*}o F+z^{*} o H)(\bar{x})+\langle y^{*},\varepsilon\rangle-(y^{*} o T+z^{*}o L){(x-\bar{x})}\geq 0.$$
		Hence it follows that
 $$({y^*}oT + {z^*}oL)(\bar x) \in {\partial _{\langle {y^*},\varepsilon\rangle }}({y^*}oF + {z^*}oH + {\delta _{U \cap C}})(\bar x).$$
This completes the proof.
	\end{proof}
By similar proof of the previous theorem we can obtain the following theorem for necessary optimality condition.
\begin{thm}\label{t4}
	Let $\bar{x}\in\Omega$. If the vector-valued map $F:X\longrightarrow
	Y$ is a $K$-convexlike map, the vector-valued map $H:X\longrightarrow Z$
	is a $D$-convexlike map, and $\bar{x}$ is an $\varepsilon$-proper
	local minimal solution of $(P)$, then there exist $y^{*}\in (K^{*})^{\circ}\cup\{0\}$ and $z^{*}\in D^{*}$ and $(y^{*},z^{*})\neq (0_{Y^{\ast}},0_{Z^{\ast}})$ such that
	\begin{eqnarray}\label{r11}&
		(y^{*}o\partial  G +z^{*}o\partial H){(\bar{x})}\cap
		\partial_{\langle y^{*},\varepsilon\rangle}(y^{*}o F+z^{*}o H+\delta_{U\cap C}){(\bar{x})}\ne\emptyset.
	\end{eqnarray}
	\end{thm}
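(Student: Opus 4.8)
The plan is to mimic the proof of Theorem \ref{t3} almost verbatim, replacing the weak-minimality argument with a proper-minimality one and tracking carefully how the auxiliary cone $K'$ interacts with the separation step. First I would fix $\bar{x}\in\Omega$ and $\varepsilon\in K$, and use the definition of $\varepsilon$-proper local minimal solution to obtain a neighborhood $U$ of $\bar{x}$ together with a convex cone $K'\subset Y$ satisfying $K\setminus l(K)\subseteq{\rm int}K'$ and $(F-G)(U\cap\Omega)\subset F(\bar{x})-G(\bar{x})-\varepsilon+Y\setminus-{\rm int}K'$. Then, for arbitrary $T\in\partial G(\bar{x})$ and $L\in\partial H(\bar{x})$, I would form the system obtained from \eqref{r12} by replacing the first cone ${\rm int}K$ with ${\rm int}K'$, and argue exactly as before that a solution $x_0\in U\cap C$ would, after adding the subdifferential inclusions $-G(x_0)+G(\bar{x})+T(x_0-\bar{x})\in -K$ and $-S(x_0)+S(\bar{x})+L(x_0-\bar{x})\in -D$ and using $H(\bar{x})-S(\bar{x})\in -D$ together with $-K'-{\rm int}K'=-{\rm int}K'$, contradict proper minimality.

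Having established that this modified system has no solution in $U\cap C$, I would apply Lemma \ref{l2}, but now with the ordering cone $K'$ in place of $K$ on the first component. This produces a nonzero pair $(y^*,z^*)\in (K')^*\times D^*$ with
\begin{equation*}
\langle y^*,F(x)-F(\bar{x})-T(x-\bar{x})+\varepsilon\rangle+\langle z^*,H(x)-H(\bar{x})-L(x-\bar{x})\rangle\geq 0,\quad\forall x\in U\cap C.
\end{equation*}
Rearranging exactly as in Theorem \ref{t3} then yields $(y^*oT+z^*oL)(\bar{x})\in\partial_{\langle y^*,\varepsilon\rangle}(y^*oF+z^*oH+\delta_{U\cap C})(\bar{x})$, which is the desired intersection membership once we confirm the same element lies in $(y^*o\partial G+z^*o\partial H)(\bar{x})$.

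The main obstacle, and the only genuinely new point compared with Theorem \ref{t3}, is showing that the multiplier $y^*$ obtained from the separation actually lies in $(K^*)^\circ\cup\{0\}$ rather than merely in $K^*\setminus\{0\}$. Here I would exploit the inclusion $K\setminus l(K)\subseteq{\rm int}K'$: since $y^*\in(K')^*$, Lemma \ref{l1} applied to the cone $K'$ gives $\langle y^*,y\rangle>0$ for every $y\in{\rm int}K'$, and in particular $\langle y^*,y\rangle>0$ for all $y\in K\setminus l(K)$. By the definition of the strict polar cone this says precisely that $y^*\in(K^*)^\circ$ whenever $y^*\neq 0$, so that $y^*\in(K^*)^\circ\cup\{0\}$ as required. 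I would also need to verify that $(K')^*\subset K^*$ so that $y^*\in K^*$ remains available for the rearrangement; this follows from $K\subseteq K'$ (a consequence of $K\setminus l(K)\subseteq{\rm int}K'$ together with $l(K)\subseteq K'$, or directly from the containment of cones), which guarantees $(K')^*\subset K^*$ and keeps every inequality sign in the argument valid.
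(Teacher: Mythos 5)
Your proposal is correct and follows exactly the route the paper intends: the paper proves Theorem \ref{t4} only by the remark that it is ``similar'' to Theorem \ref{t3}, and your adaptation --- running that argument with the cone $K'$ supplied by the definition of $\varepsilon$-proper local minimality, applying Lemma \ref{l2} to $K'$ in place of $K$, and upgrading $y^{*}$ to $(K^{*})^{\circ}\cup\{0\}$ via Lemma \ref{l1} applied to $K'$ together with $K\setminus l(K)\subseteq{\rm int}\,K'$ --- is precisely the omitted argument. The only imprecision is your claim that $l(K)\subseteq K'$, which the paper's definition does not actually guarantee; it is harmless, because the two steps that need $K\subseteq K'$ (namely $K+{\rm int}\,K'\subseteq{\rm int}\,K'$ and $(K')^{*}\subseteq K^{*}$) already follow from $l(K)\subseteq{\rm cl}(K')$, obtained by letting $t\to 0^{+}$ in $v+tk_{0}\in{\rm int}\,K'$ for $v\in l(K)$ and $k_{0}\in K\setminus l(K)$, and in the usual pointed case $l(K)=\{0\}\subseteq K'$ anyway.
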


\begin{rem}
 To the best of our knowledge, there is no result on the existence of necessary optimality conditions of problem $(P)$ under $K$-convexlike  assumption. Therefore, Theorems \ref{t3} and \ref{t4} are new in the literature.
\end{rem}

\section{Acknowledgment}
 F. Bozorgnia was  supported by the Portuguese National Science Foundation through FCT fellowship SFRH/BPD/33962/2009.

  The authors are very grateful to   anonymous referees for
carefully reading their manuscript and for several comments and
suggestions which helped them to improve the paper.



\end{document}